\number\time{}
\newtheorem{theorem}{Theorem}
\newtheorem{lemma}[theorem]{Lemma}
\newtheorem{corollary}[theorem]{Corollary}
\newcommand\eps{\varepsilon}
\newcommand\R{\mathbb{R}}
\newcommand\Z{\mathbb{Z}}
\newcommand\E{\mathbb{E}}
\newcommand\Prb{\mathbb{P}}
\newcommand\Po{\mathrm{Po}}
\newcommand\cF{\mathcal{F}}
\newcommand\cG{\mathcal{G}}
\newcommand\cP{\mathcal{P}}
\newcommand\cY{\mathcal{Y}}
\newcommand\tsum{{\textstyle\sum}}
\newcommand\Fg[1]{Figure~\ref{f:#1}}
\newcommand\Lm[1]{Lemma~\ref{l:#1}}
\newcommand\Co[1]{Corollary~\ref{c:#1}}
\newcommand\Th[1]{Theorem~\ref{t:#1}}
\begin{document}

\title{A critical constant for the $k$-nearest neighbour model}
\author{Paul Balister%
\thanks{University of Memphis, Department of
Mathematics, 3725 Norriswood, Memphis, TN 38152, USA}
\and B\'ela Bollob\'as$^*$%
\thanks{Trinity College, Cambridge CB2 1TQ, UK}
\and Amites Sarkar$^*$%
\and Mark Walters%
\thanks{Peterhouse, Cambridge CB2 1RD}}

\maketitle

\begin{abstract}
Let $\cP$ be a Poisson process of intensity one in a square $S_n$ of area $n$.
For a fixed integer $k$, join every point of $\cP$ to its $k$ nearest neighbours,
creating an undirected random geometric graph $G_{n,k}$. We prove that there exists
a critical constant $c_{\text{crit}}$ such that for $c<c_{\text{crit}}$,
$G_{n,\lfloor c\log n\rfloor}$ is disconnected with probability tending to 1 as
$n\to\infty$, and for $c>c_{\text{crit}}$,
$G_{n,\lfloor c\log n\rfloor}$ is connected with probability tending to 1 as
$n\to\infty$. This answers a question posed by the authors in~\cite{BBSW}.
\end{abstract}

Let $\cP$ be a Poisson process of intensity one in a square $S_n$ of area $n$.
For a fixed integer $k$, we join every point of $\cP$ to its $k$ nearest neighbours,
creating an undirected random geometric graph $G_{S_n,k}=G_{n,k}$ in which every vertex
has degree at least $k$. The connectivity of these graphs was studied by the present
authors in~\cite{BBSW}. It is not hard to see that $G_{n,k}$ becomes connected
around $k=\Theta(\log n)$, and we proved in~\cite{BBSW} that if $k(n)\le 0.3043\log n$
then the probability that $G_{n,k(n)}$ is connected tends to zero as $n\to\infty$,
while if $k(n)\ge 0.5139\log n$ then the probability that $G_{n,k(n)}$ is connected
tends to one as $n\to\infty$. However, we were unable to prove the natural conjecture
that there exists a critical constant $c_{\text{crit}}$ such that for
$c<c_{\text{crit}}$,
\[
\Prb(G_{n,\lfloor c\log n\rfloor}{\rm \ is\ connected})\to 0
\]
and for $c>c_{\text{crit}}$,
\[
\Prb(G_{n,\lfloor c\log n\rfloor}{\rm \ is\ connected})\to 1
\]
as $n\to\infty$. In this paper we prove this conjecture.

Central to the proof is the observation that, while there are no isolated vertices in
$G_{n,k}$, the obstructions to connectivity are nonetheless {\em small}. More precisely,
we have the following lemma, which is immediate from the proofs of Lemmas 2 and~6
of~\cite{BBSW}.

\begin{lemma}\label{l:small}
 For fixed\/ $c>0$ and\/ $L$, there exists $c'=c'(c,L)>0$, depending only on $c$
 and\/~$L$, such that for any $k\ge c\log n$, the probability that\/ $G_{n,k}$ contains
 two components each of\/ $($Euclidean$)$ diameter at least\/ $c'\sqrt{\log n}$, or
 any edge of length at least\/ $c'\sqrt{\log n}$, is $O(n^{-L})$.
\end{lemma}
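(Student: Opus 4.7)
The plan is to prove the two conclusions of the lemma separately---no long edges, and no two large components---by Poisson tail bounds combined with a union-bound argument over a suitable discretisation, and then to fix the single constant $c'=c'(c,L)$ large enough for both.

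\textbf{No long edge.} An edge of length at least $r$ in $G_{n,k}$ forces some $x \in \cP$ to have its $k$-th nearest neighbour at distance at least $r$, equivalently $|\cP\cap D(x,r)|\le k$. Taking $r=c'\sqrt{\log n}$, the expected count is $\pi c'^2\log n$, so a standard Chernoff bound for the Poisson distribution gives
\[
\Prb\bigl(|\cP\cap D(x,r)|\le k\bigr) \le \exp\bigl(-\alpha c'^2\log n\bigr)
\]
for some $\alpha>0$, provided $\pi c'^2$ is large enough relative to $c$. Cover $S_n$ by an $O(1)$-spaced grid of $O(n)$ points; since any Poisson-centred disk of radius $r$ contains a grid-centred disk of radius $r-1$, a union bound over grid points gives total probability $O(n^{1-\alpha c'^2})=O(n^{-L})$ once $c'$ is large.

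\textbf{No two large components.} Fix an intermediate constant $c_0=c_0(c,L)$ so that the previous step already gives $O(n^{-L})$ for edges exceeding $c_0\sqrt{\log n}$, and work on this good event. Suppose for contradiction that $C_1$ and $C_2$ are disjoint components each of diameter at least $c'\sqrt{\log n}$. Pick $u,v\in C_1$ realising its diameter and join them by a path of edges in $C_1$; by the edge bound this path stays within a strip $T$ of width $O(c_0\sqrt{\log n})$ around $\overline{uv}$. Now every Poisson point of $T$ lying outside $C_1$ has its $k$ nearest neighbours within distance $c_0\sqrt{\log n}$ yet not joined to $C_1$, forcing its local $k$-neighbourhood to lie in $T\setminus C_1$. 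Formalising this, one locates a sub-disk $D$ of radius $\Theta(\sqrt{\log n})$ adjacent to $C_1$ whose Poisson count deviates from its mean by $\Omega(\log n)$, an event of probability $\exp(-\Omega(c'^2\log n))$. Summing over a polynomial number of grid choices for $(u,v)$ yields probability $O(n^{-L})$ once $c'$ is chosen sufficiently large in terms of $c,c_0,L$.

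\textbf{Main obstacle.} The edge-length bound is routine Poisson tail estimation; the real content is the second part. The delicate step is to exhibit a deterministic geometric witness---here a strip $T$ and a sub-disk $D$ inside it whose occupation is atypical---whose probability beats the union-bound factor arising from discretising over $(u,v)$. This witness construction is precisely the content of Lemma~6 of \cite{BBSW}, which one would invoke or mildly reprove to obtain the stated bound.
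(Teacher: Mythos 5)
For this lemma the paper offers no self-contained argument: it simply declares the statement immediate from the proofs of Lemmas~2 and~6 of \cite{BBSW}, so the benchmark is essentially a citation. Your long-edge part is in the spirit of that reference (and of \Co{edges} in this paper) and is fine in outline, with two details to repair: near the boundary of $S_n$ the disc $D(x,r)$ need not lie in $S_n$, so you must work with quarter-discs of area at least $\pi r^2/4$; and your Chernoff step tacitly uses an upper bound $k=O(\log n)$, since the hypothesis $k\ge c\log n$ alone does not make $\Prb\bigl(\Po(\pi c'^2\log n)\le k\bigr)$ small --- harmless in the regime $0.3\log n<k<0.52\log n$ in which the lemma is applied, but it should be stated.

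The genuine gap is in your second part, and the sketch you give would fail as written. First, a path in $C_1$ from $u$ to $v$ is not confined to a strip around the segment $\overline{uv}$: short edges bound the step length, not the excursion of the path from the chord. Second, a point $x$ of $T$ outside $C_1$ is not forced to have its $k$ nearest neighbours in $T\setminus C_1$; the absence of an edge between $x$ and $C_1$ only ensures those neighbours avoid $C_1$, and they may lie outside $T$ altogether. Third, the decisive step, ``one locates a sub-disc $D$ of radius $\Theta(\sqrt{\log n})$ adjacent to $C_1$ whose count deviates from its mean by $\Omega(\log n)$,'' is asserted rather than derived; producing such a deterministic witness that survives a union bound over only polynomially many discretised choices is exactly the hard content of the statement, and the construction in \cite{BBSW} proceeds differently (from the separating structure around one of the two large components, not from a strip around a diameter-realising path). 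You acknowledge this by deferring to Lemma~6 of \cite{BBSW}, which makes your proof, in effect, the same citation the paper makes; if you take that route the intermediate sketch should be deleted or replaced by the actual construction. Note also that since $c'$ must depend only on $c$ and $L$, one must rerun the proofs in \cite{BBSW} with the constants tracked --- which is precisely why the paper cites the proofs rather than the statements --- so ``invoke'' really does mean ``reprove with explicit dependence on $c$ and $L$.''
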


This lemma enables us to restrict attention to ``local'' events, whose probabilities
we will estimate. Although heuristics and numerical evidence suggest that the actual
obstructions to connectivity arise far from the boundary of $S_n$, we were unable to
prove this in~\cite{BBSW}. Therefore we must consider the following two pairs of families
of events.

Let $M$ be a large integer, which we will choose in a moment. For the first pair,
we consider a Poisson process $\cP_S$ of intensity one in the square
$S=[-\frac12M\sqrt k,\frac12M\sqrt k]^2$ of area $M^2k$
centred at the origin, and construct the random graph $G_{S,k}=G_{M^2k,k}$ as above.
The event $A_k$ occurs when $G_{S,k}$ contains a component all of whose vertices lie within
the central square $S'=\tfrac{1}{2}S=\{\frac{x}{2}:x\in S\}$ of area $\frac14 M^2k$, and the event
$A'_k$ occurs when $G_{S,k}$ contains a component all of whose vertices lie within the central
square $S''=\tfrac{3}{4}S=\{\frac{3x}{4}:x\in S\}$ of area $\frac{9}{16}M^2k$.

For the second family, let $\cP_R$ be a Poisson process of intensity one in the square
$R=[0,M\sqrt k]\times[-\frac12M\sqrt k,\frac12M\sqrt k]$ of area $M^2k$, and join
every point of $\cP_R$ to its $k$ nearest neighbours to form the random geometric graph
$G_{R,k}$. The event $B_k$ occurs when $G_{R,k}$ contains a component all of whose
vertices lie within the square $R'=\tfrac{1}{2}R$, and the
event $B'_k$ occurs when $G_{R,k}$ contains a component all of whose vertices lie within the
rectangle $R''=\tfrac{3}{4}R$ (see \Fg{0}).

\begin{figure}
\centerline{\includegraphics{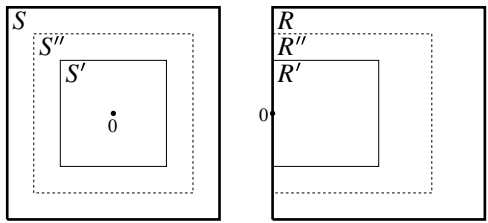}}
\caption{Regions used in defining $A_k$, $A'_k$, $B_k$, $B'_k$.}
\label{f:0}
\end{figure}

We now discuss the choice of $M$. It should be large enough to ensure that the probability of
seeing a long edge or two large components (relative to the size of $S$ or $R$) is much smaller
than the probabilities of the four events above. Specifically, we shall choose $M$ so that $M\ge40$ and
\begin{equation}\label{e:m1}
 \Prb(G_{n,k}\text{ contains two components with diameter greater than }
 \tfrac{1}{8}M\sqrt{k})=o(e^{-9k})
\end{equation}
(see \Lm{limsup} and \Co{edges}). Now we may assume, from the results in~\cite{BBSW}, that
$0.30\log n<k<0.52\log n$, so that
\[
 n^{-5}=o(e^{-9k})\qquad\text{and}\qquad
 \tfrac{1}{8}\sqrt{k}>\tfrac{1}{15}\sqrt{\log n}.
\]
Therefore, using the notation of \Lm{small}, it will be enough to take
\[
  M=\max\{15c'(0.3,5),40\}.
\]
From now on, no more reference will be made to the choice of $M$.

Our first target is to estimate $p_1(k)=\Prb(A_k)$ and $p_2(k)=\Prb(B_k)$. Specifically,
we will show that
\[
p_1(k)=e^{-(c_1+o_k(1))k}\quad\text{and}\quad
\max(p_1(k),p_2(k))=e^{-(c_2+o_k(1))k}.
\]
Defining
\[
f_1(k)=-\frac{\log p_1(k)}{k}\quad\text{and}\quad f_2(k)=-\frac{\log p_2(k)}{k},
\]
we will prove the following.

\begin{theorem}\label{t:akbk}
\[
 c_1=\lim_{k\to\infty}f_1(k)\quad\text{and}\quad c_2=\lim_{k\to\infty}\min\{f_1(k),f_2(k)\}
 \quad\text{exist.}
\]
\end{theorem}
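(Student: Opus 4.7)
The plan is to show that both limits exist by establishing an approximate supermultiplicative relation for $p_1$ (respectively for $\max\{p_1,p_2\}$) and then invoking Fekete's lemma. I would first rescale space by $1/\sqrt k$ so that $A_k$ becomes an event for a Poisson process of intensity $k$ on the fixed square $[-M/2,M/2]^2$, with each point joined to its $k$ nearest neighbours; in this form $k$ plays only the role of an intensity and a neighbourhood parameter, and $p_1(k)$ is the probability of a large-deviations event of rate $k$.

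The key step is to prove a bound of the form
\[
 p_1(k_1+k_2) \ge \exp\bigl(-o(k_1+k_2)\bigr)\, p_1(k_1)\, p_1(k_2).
\]
The mechanism is Poisson superposition: a Poisson process of intensity $k_1+k_2$ splits as the union of independent Poisson processes of intensities $k_1$ and $k_2$. Conditioning the two parts to realize obstructions in suitably nested central regions---using the primed event $A'_{k_i}$ in place of $A_{k_i}$ to leave a spatial margin---one tries to show that, after superposing, the resulting $(k_1+k_2)$-nearest neighbour graph still contains a component trapped in $S'$. Long edges and atypically large components are discarded using~(\ref{e:m1}) together with \Lm{small}. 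Fekete's lemma then produces $c_1=\lim f_1(k)$. For $c_2$ I would run the same argument with $\max\{p_1(k),p_2(k)\}$ in place of $p_1$: an obstruction of either $A$-type or $B$-type can be inserted into the superposition construction, so $-\log\max\{p_1,p_2\}$ is again approximately supermultiplicative, and Fekete yields $\lim\min\{f_1,f_2\}=c_2$.

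The hard part is the supermultiplicativity step itself, because the $k$-nearest neighbour graph depends on $k$ in an essential way: the $(k_1+k_2)$-NN graph of a superposed configuration is not a union of the $k_i$-NN graphs of its two parts, and the extra points contributed by one part can create new long edges that drag a component out of $S'$. The role of the primed events $A'_k$ is precisely to furnish the buffer needed to absorb this perturbation, and the technical core is to turn this into a quantitative coupling---showing that the $k$-NN obstruction is stable under the addition of a Poisson contribution of intensity comparable to $k$, up to a correction that is subexponential in $k_1+k_2$. Keeping the error in the exponent below any fixed $\varepsilon(k_1+k_2)$, uniformly for $k_1,k_2$ large, is where I expect most of the work to be concentrated.
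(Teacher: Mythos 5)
There is a genuine gap, and it is exactly at the step you yourself identify as the core: the superposition coupling does not give supermultiplicativity. Conditioning the intensity-$k_1$ layer on $A'_{k_1}$ and the independent intensity-$k_2$ layer on $A'_{k_2}$ does not force the superposed process into anything like $A_{k_1+k_2}$. An obstruction for the $k_1$-nearest-neighbour graph is an isolated cluster whose isolation comes from a region of atypically \emph{low} density (for instance the empty annulus $D_3\setminus D_1$ in the proof of \Lm{limsup}); conditioned only on its own event $A'_{k_2}$, the second layer places, with overwhelming conditional probability, about $k_2\cdot(\text{area})$ points into precisely that region, and in the $(k_1+k_2)$-nearest-neighbour graph of the union these points attach both to the trapped cluster and to the outside, so the component escapes. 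The buffer provided by the primed events is purely spatial (component allowed in $\tfrac34 S$ rather than $\tfrac12 S$) and is irrelevant to this: the interfering points land inside and around the first layer's obstruction, not near the boundary of $S$. Moreover the two conditioned obstructions occur at independent random locations, so they cannot be made to reinforce one another without further surgery. A repair would require conditioning the second layer to reproduce the first layer's rescaled density profile (empty where it is empty, dense where it is dense); but the probability of that event is the large-deviation cost of that particular profile at intensity $k_2$, not $p_1(k_2)$, so to keep the error $e^{-o(k_1+k_2)}$ you would need to know, uniformly in $k$, which density profiles force the obstruction and that the near-optimal profile can be taken independent of $k$ --- which is essentially the content of the theorem you are trying to prove.

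That missing uniformity is what the paper supplies by a different route: it discretizes $S$ into $(MN)^2$ squares of area $k/N^2$ and records only the rounded density in each square (a \emph{configuration}, of which there are finitely many, with the number independent of $k$); Type A and Type B configurations are discarded at cost $o(e^{-9k})$, which is negligible by \Lm{limsup}; the stability lemma \Lm{key} shows that within a good configuration the presence of an edge between two given small squares is unaffected by lowering $k$ to $k(1-\eps)$, giving $A_k\cap I(\cG)\subseteq I(\cY)\subseteq A'_{k(1-\eps)}\cap I(\cG)$ (\Lm{engine}); each configuration has probability $e^{-(\theta_F+o(1))k}$ by \Lm{plogp}, and together with \Lm{tech} this sandwiches $f_1(k)$ between $\theta$ and $\theta/(1-\eps)$, after which $\eps\to0$ yields the limit. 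Note also that for $c_2$ the paper must allow mixing of the two obstruction types: \Lm{tech} bounds $\Prb(B'_k)$ by a combination of $\Prb(A_k)$ and $\Prb(B_k)$, because a boundary-box component may in fact sit away from the boundary; this is why the limit is attached to $\min\{f_1,f_2\}$, and any version of your argument for $c_2$ would need the same mixed treatment rather than a clean supermultiplicativity for $\max\{p_1,p_2\}$ alone.
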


The proof of this theorem, given in the next section, will occupy most of the paper.
Having established it, two straightforward tiling arguments will complete the
proof of the conjecture. The main idea in the proof of \Th{akbk} is that, for a fixed
$\eps>0$, there is a decomposition of the probability space of $G_{S,k}$ (or $G_{R,k}$)
into a finite set $\cF(\eps)$ of disjoint events or {\em configurations}, such
that the knowledge of which configuration occurs almost always determines
``up to $\eps$'' whether or not $A_k$ (or $B_k$) occurs. Once we have
this set of configurations, we can accurately estimate the probability of each one using
the following lemma, which is Lemma~1 of~\cite{BBSW}. (The proof of the lemma is just
a simple computation.)

\begin{lemma}\label{l:plogp}
 Let\/ $A_1,\dots,A_r$ be disjoint regions of\/ $\R^2$ and\/
 $\rho_1,\dots,\rho_r\ge0$ real numbers such that\/ $\rho_i|A_i|\in\Z$.
 Then the probability that a Poisson process with intensity~$1$ has
 precisely $\rho_i|A_i|$ points in each region $A_i$ is
 \[
  \exp\left\{\sum_{i=1}^r(\rho_i-1-\rho_i\log\rho_i)|A_i|
  +O(r\log_+\tsum\rho_i|A_i|)\right\}
 \]
 with the convention that\/ $0\log 0=0$, and\/ $\log_+ x=\max(\log x,1)$.
\end{lemma}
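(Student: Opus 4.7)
The plan is to reduce the problem to a direct computation using the defining properties of the Poisson process together with Stirling's formula. Since the regions $A_1,\dots,A_r$ are disjoint, the numbers of Poisson points in the various $A_i$ are independent, and the count in $A_i$ is Poisson with mean $|A_i|$. Writing $n_i=\rho_i|A_i|\in\Z_{\ge0}$, the probability in question is therefore
\[
 \prod_{i=1}^r\frac{e^{-|A_i|}|A_i|^{n_i}}{n_i!}.
\]
Taking logarithms turns the target formula into a question about the sum $\sum_i\bigl(-|A_i|+n_i\log|A_i|-\log n_i!\bigr)$, so everything reduces to estimating $\log n_i!$.

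Next, I apply Stirling in the form $\log n!=n\log n-n+O(\log_+ n)$, with $0\log 0=0$ handling the case $n=0$ (which also covers $\rho_i=0$, where the $i$th factor contributes only $-|A_i|$, exactly as the formula demands). Substituting $n_i=\rho_i|A_i|$ and using $\log n_i=\log\rho_i+\log|A_i|$ when $\rho_i>0$, the leading terms collapse: the $n_i\log|A_i|$ term cancels against part of $n_i\log n_i$, leaving
\[
 -|A_i|+n_i-n_i\log\rho_i
 =(\rho_i-1-\rho_i\log\rho_i)|A_i|.
\]
Summing over $i$ gives the main term in the exponent stated in the lemma.

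It remains to bound the error. Each Stirling application contributes $O(\log_+ n_i)$, and since $\log_+$ is monotone and $n_i\le\sum_j n_j=\sum_j\rho_j|A_j|$, this is at most $O(\log_+\sum_j\rho_j|A_j|)$. Summing over the $r$ regions gives the claimed $O(r\log_+\sum\rho_i|A_i|)$ error.

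The computation is essentially mechanical; the only mild subtlety is bookkeeping around $\rho_i=0$ and the use of $\log_+$ rather than $\log$ so that the bound remains valid when $\sum\rho_i|A_i|$ is very small (in particular zero), which is precisely why the $\log_+ x=\max(\log x,1)$ convention is built into the statement. No step here is a real obstacle; the lemma is included essentially for reference, and the content of the paper lies in how it will be combined with the configuration decomposition to estimate $p_1(k)$ and $p_2(k)$.
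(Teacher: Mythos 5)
Your proof is correct and is exactly the ``simple computation'' the paper alludes to (it cites Lemma~1 of \cite{BBSW} rather than proving it): independence of the Poisson counts on disjoint regions, the Poisson probability mass function, and Stirling's formula with the $\rho_i=0$ and $\log_+$ conventions handled as you do. Nothing further is needed.
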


One of the configurations for which $A_k$ (or $B_k$) occurs will dominate,
in the sense that it will have the highest probability of all such configurations,
and we will be able to read off the value of $c_1$ (or $c_2$) from it.

\section{Proof of Theorem 2}

Let us fix $k$ and estimate $p_1(k)=\Prb(A_k)$ and $p_2(k)=\Prb(B_k)$. We will consider
very fine discretizations of the square regions $R$ and $S$ (both of area $M^2k$).
In the following, we will frequently have to neglect certain ``bad" events.
We must show that the probability of each of these events is negligible compared to those
of $A_k$ and $B_k$. For this we will need lower bounds on $p_1(k)$ and $p_2(k)$, or, more
precisely, upper bounds on $\limsup_{k\to\infty}f_1(k)$ and $\limsup_{k\to\infty}f_2(k)$.
Such bounds are provided below. We follow the method of~\cite{BBSW}, although a version
of this lemma (with larger constants) was obtained earlier by Xue and Kumar~\cite{XuKu}.

\begin{figure}
\centerline{\includegraphics{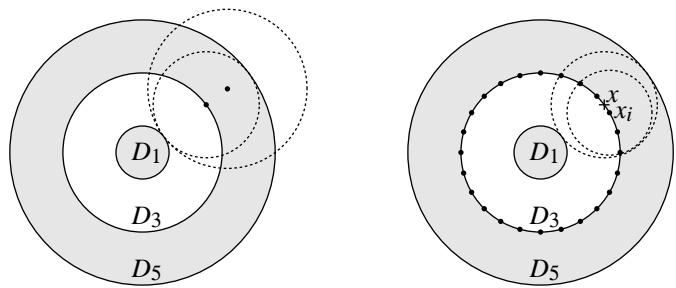}}
\caption{The regions $D_1$, $D_3$ and $D_5$ used in the proof of \Lm{limsup}.}
\label{f:1}
\end{figure}

\begin{lemma}\label{l:limsup}
 \[
  \limsup_{k\to\infty} f_1(k)\le 8\qquad\text{and}\qquad
  \limsup_{k\to\infty} f_2(k)\le 8.
 \]
\end{lemma}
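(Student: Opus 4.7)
The plan is to exhibit, for each large $k$, an explicit configuration of the Poisson process that forces $A_k$ (respectively $B_k$) and whose probability, estimated by \Lm{plogp}, is $e^{-(8+o_k(1))k}$. For $A_k$ I would use two concentric discs $D_1\subset D_3$ centred at the origin, of radii $r_1=\sqrt{k/\pi}$ and $r_3=3r_1$, so that $|D_1|=k$ and $|D_3\setminus D_1|=8k$; both lie in $S'$ since $M\ge40$. Let $E_1$ be the event that $\cP_S\cap D_1$ contains exactly $k+1$ points and that $\cP_S\cap(D_3\setminus D_1)$ is empty. Applying \Lm{plogp} with two regions and densities $\rho_1=(k+1)/k$ and $\rho_2=0$ gives
\[
\Prb(E_1)=\exp\bigl\{-|D_3\setminus D_1|+O(\log k)\bigr\}=e^{-(8+o_k(1))k}.
\]

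Next I would show that $E_1$ forces $A_k$ with conditional probability tending to one. Two facts suffice: (i) the $k{+}1$ points of $\cP_S\cap D_1$ form a component of $G_{S,k}$; and (ii) no point of $\cP_S\setminus D_3$ has a point of $\cP_S\cap D_1$ among its $k$ nearest neighbours. Fact~(i) is immediate: any two points of $D_1$ are within distance $<2r_1$ of each other, while, by emptiness of $D_3\setminus D_1$, every other point of $\cP_S$ is at distance $\ge r_3-r_1=2r_1$ from each point of $D_1$ (with strict inequality almost surely). Fact~(ii) I would handle by Poisson concentration: for each $q\in\cP_S\setminus D_3$ within distance $2r_1$ of $\partial D_3$, the set $B(q,2r_1)\setminus D_3$ has area at least $2k$, so contains $\ge k$ Poisson points except with probability $e^{-\Omega(k)}$; points $q$ further out have $B(q,2r_1)$ disjoint from $D_1$ and so are automatically harmless. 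A union bound over the $O(k)$ expected $q$ in the danger zone then gives~(ii) with probability $1-o(1)$, and hence $\Prb(A_k)\ge e^{-(8+o_k(1))k}$, so $\limsup_{k\to\infty}f_1(k)\le8$.

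For $B_k$ I would use the same argument centred on the boundary $x=0$ of $R$, with $D_1^+,D_3^+$ the half-discs of radii $\sqrt{2k/\pi}$ and $3\sqrt{2k/\pi}$ intersected with $\{x\ge0\}$. Once more $|D_1^+|=k$ and $|D_3^+\setminus D_1^+|=8k$, both lying in $R'$, and the verifications of~(i) and~(ii) go through with half- and quarter-balls replacing full balls near the wall, giving $\limsup_{k\to\infty}f_2(k)\le 8$.

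The main obstacle is fact~(ii): ensuring that the natural Poisson density outside the empty annulus does not produce a stray point whose $k$th nearest neighbour reaches back into $D_1$. The buffer ratio $r_3/r_1=3$ is chosen precisely so that the barrier distance $2r_1$ comfortably exceeds the typical $k$-nearest-neighbour radius $\sqrt 2\,r_1$ from a $\partial D_3$ point, leaving ample room for the Poisson concentration estimate; the routine technical work is carrying out the union bound cleanly and, in the $B_k$ case, handling the additional edge effects near $x=0$.
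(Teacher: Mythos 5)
Your construction is essentially the paper's: a disc of area $\approx k$ containing $k+1$ points surrounded by an empty annulus of area $\approx 8k$, so the dominant cost $e^{-(8+o_k(1))k}$ is the same, and the remaining work is exactly your fact~(ii). But your treatment of (ii) has a genuine gap: the claim that points $q$ beyond the danger zone are ``automatically harmless because $B(q,2r_1)$ is disjoint from $D_1$'' is a non sequitur. Disjointness of $B(q,2r_1)$ from $D_1$ says nothing about whether a $D_1$ point is among $q$'s $k$ nearest neighbours; that is ruled out only if $q$ has at least $k$ points of $\cP_S$ strictly closer than its distance to $D_1$, which is not automatic for distant points in locally sparse regions. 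Worse, your radius-$2r_1$ criterion genuinely fails far out: near a corner of $S$ the region $B(q,2r_1)\cap S$ has area only $\pi r_1^2=k$, so the event ``at least $k$ points in $B(q,2r_1)$'' is not a high-probability event there, and with many Poisson points near $\partial S$ some of them will violate it. The far-field case therefore needs its own estimate, using the larger radius $\mathrm{dist}(q,D_1)\ge\|q\|-r_1$ and a short case analysis (ball inside $S$ versus truncated by $\partial S$, and overlap with the empty annulus), in the spirit of \Lm{bound} and the Mecke-type expectation bound in \Co{edges}; one must also note that these certifying events live outside $D_3$ and so are independent of your event $E_1$. This is patchable, but as written the step fails.

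The paper's proof avoids this issue entirely with a third disc $D_5$ and its condition (III): it only certifies that every disc of radius $2r$ centred on $\partial D_3$ meets $D_5\setminus D_3$ in at least $k+1$ points (reduced to finitely many test points by an $\eps$-net), and then a one-line containment argument --- any disc centred at a point $x$ outside $D_3$ and touching $D_1$ contains such a boundary disc --- handles all outside points, near and far, simultaneously. You may want to compare your union bound with that cleaner deterministic reduction. Finally, your half-disc construction for $B_k$ is more work than needed: $B_k$ only requires a component lying in $R'$, not one attached to the boundary, so the paper simply reuses the interior disc configuration placed inside $R'$ (possible since $M\ge40$), with no wall effects to check.
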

\begin{proof}
Consider a configuration of three concentric discs $D_1$, $D_3$ and $D_5$,
of radii $r$, $3r$ and $5r$ respectively, where $\pi r^2=k+1$ (see \Fg{1}). Since
the diameter of $D_5$ is at most $8\sqrt k$ and $M\ge40$, one can choose the centre of
the discs so that all the discs lie entirely within the central square $S'$ (or $R'$).
Call the configuration
{\em bad\/} if (I) $D_1$ contains at least $k+1$ points, (II) the annulus $D_3\setminus D_1$
contains no points, and (III) the intersection of $D_5\setminus D_3$ with any disc of radius
$2r$ centred at a point $P$ on the boundary of $D_3$ contains at least $k+1$ points. Now if
the configuration is bad, then $A_k$ (or $B_k$) will occur, because the $k$ nearest neighbours of a
point in $D_1$ all lie within $D_1$ and the $k$ nearest neighbours of a point outside $D_3$
all lie outside $D_3$. (Otherwise, there would be a point $x$ outside $D_3$ and a disc
centred at $x$ touching $D_1$ that contained fewer than $k+1$ points. But this disc
contains a disc of radius $2r$ about some point on the boundary of $D_3$, contradicting (III).)
Hence there will be no edge connecting a point inside $D_1$ to a point outside $D_1$.
Condition (I) holds with probability about $\frac12$ (in fact, slightly more than $\frac12$),
and condition (II) holds with probability
$e^{-8(k+1)}$. Now consider Condition (III). Note that
there is an $\eps>0$ such that any disc of radius $(2-\eps)r$ around any point $x$ on the
boundary of $D_3$ intersects the annulus $D_5\setminus D_3$ in a region $D_x$ of area $2(k+1)$. It
follows from the concentration of the Poisson distribution (see for instance \Lm{bound}) that
the probability that $D_x$ contains less than $k+1$ points is $o_k(1)$.
Pick points $x_1,\dots, x_t$ around the boundary of $D_3$ so that any point of the boundary
of $D_3$ is within $\eps r$ of some $x_i$. Clearly we can choose $t=\lceil 3\pi/\eps\rceil$,
so that $t$ is independent of~$k$. Hence the probability that any $D_{x_i}$ contains fewer
than $k+1$ points is $o_k(1)$, but any disc of radius $2r$ about $x$ contains a disc of
radius $(2-\eps)r$ about some $x_i$. Thus the probability that any such $x$ exists with
the disc of radius $2r$ about $x$ containing fewer than $k+1$ points is $o_k(t)=o_k(1)$,
and so Condition (III) holds with probability $1-o_k(1)$.
Since the events corresponding to conditions (I), (II) and (III) are independent,
$p_1(k),p_2(k)\ge e^{-(8+o_k(1))k}$ and the result follows.
\end{proof}

Recall that in the last section we defined four families of events $A_k$, $A'_k$, $B_k$ and $B'_k$.
We are only really interested in $A_k$ and $B_k$; the events $A'_k$ and $B'_k$ arise only
because of a technicality, and it will be convenient to prove a simple lemma (\Lm{tech})
about them at the outset. Before we do this, it will be convenient to prove a simple
lemma bounding the Poisson distribution, and deduce a bound on the edge lengths in $G_{S,k}$.

\begin{lemma}\label{l:bound}
If $\rho>1$ then
\[
\Prb({\rm Po}(A)\ge\rho A)\le e^{(\rho-1-\rho\log\rho)A}.
\]
If $\rho<1$ then
\[
\Prb({\rm Po}(A)\le\rho A)\le e^{(\rho-1-\rho\log\rho)A}.
\]
\end{lemma}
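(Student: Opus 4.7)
The statement is the standard Chernoff bound for the Poisson distribution, and I would prove it by the moment generating function method. The starting point is the identity $\mathbb{E}[e^{tX}]=\exp(A(e^t-1))$ for $X\sim\mathrm{Po}(A)$, which is immediate from the definition of the Poisson distribution (or from its Taylor series).

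For the upper-tail bound, fix $t>0$ and apply Markov's inequality to $e^{tX}$:
\[
\Prb(\Po(A)\ge\rho A)=\Prb(e^{tX}\ge e^{t\rho A})\le e^{-t\rho A}\,\mathbb{E}[e^{tX}]=\exp\bigl(A(e^t-1)-t\rho A\bigr).
\]
Now I would optimize the exponent in $t$: differentiating gives $Ae^t-\rho A=0$, so the optimum is at $t=\log\rho$, which is positive precisely when $\rho>1$. Substituting this value yields the desired exponent $(\rho-1-\rho\log\rho)A$.

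For the lower-tail bound, I would use exactly the same calculation but with $t<0$, so that $\Prb(X\le\rho A)=\Prb(e^{tX}\ge e^{t\rho A})$ still holds. The optimization again gives $t=\log\rho$, which is negative precisely when $\rho<1$, and the same expression $(\rho-1-\rho\log\rho)A$ falls out.

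Since this is entirely a routine calculation with the Poisson MGF, I do not anticipate any real obstacle; the only thing to be careful about is the sign of $t$ in each of the two cases, to ensure that Markov's inequality is being applied to a nonnegative random variable in the correct direction. Note also that $\rho-1-\rho\log\rho\le0$ for all $\rho>0$ (with equality only at $\rho=1$), so the right-hand side is always at most $1$, as a probability bound must be.
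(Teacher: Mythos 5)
Your proof is correct and is essentially the same as the paper's: the paper applies Markov's inequality directly to $\rho^{X}=e^{X\log\rho}$, i.e.\ with the optimal tilt $t=\log\rho$ already plugged in, while you carry a free parameter $t$ and optimize, arriving at the identical bound with the same sign considerations for the two tails.
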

\begin{proof}
Let $X\sim{\rm Po}(A)$. Then
\[
\E(\rho^X)=\sum_{n=0}^{\infty}\rho^n\frac{A^n}{n!}e^{-A}=e^{(\rho-1)A}.
\]
Therefore if $\rho>1$ then
\[
\Prb(X>\rho A)\le \E(\rho^{X-\rho A})=e^{(\rho-1-\rho\log\rho)A},
\]
and if $\rho<1$ then
\[
\Prb(X<\rho A)\le \E(\rho^{X-\rho A})=e^{(\rho-1-\rho\log\rho)A}.
\]
\end{proof}

\begin{corollary}\label{c:edges}
 For any $m$ with $M^2k\le m\le n$ and $0.3\log n\le k$,
 the probability that\/ $G_{m,k}$ contains an
 edge of length at least $\frac18M\sqrt k$ is $o(e^{-9k})$.
\end{corollary}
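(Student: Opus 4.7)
The plan is to reduce the existence of a long edge to a local low-density event and then control it by a first-moment argument combined with \Lm{bound}.

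Write $r=\tfrac18 M\sqrt k$. The starting observation is that $G_{m,k}$ contains an edge of length at least $r$ incident to some $p\in\cP$ if and only if the $k$th nearest neighbour of $p$ in $\cP$ is at distance at least $r$, equivalently $|\cP\cap D(p,r)\cap S_m|\le k$ (counting $p$ itself). By the Mecke--Slivnyak formula for a Poisson process of intensity one,
\[
\E\bigl[\#\{p\in\cP:p\text{ is incident to an edge of length }\ge r\}\bigr]=\int_{S_m}\Prb\bigl(\Po(|D(x,r)\cap S_m|)\le k-1\bigr)\,dx,
\]
and Markov's inequality bounds the probability asked for by this integral.

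The next step is to split $S_m$ into an interior $S^{\mathrm{int}}$ of points at distance $\ge r$ from $\partial S_m$, and a boundary strip of area at most $4r\sqrt m$. For $x\in S^{\mathrm{int}}$ the disc $D(x,r)$ lies entirely in $S_m$ and has area $\pi r^2=\pi M^2 k/64\ge 25\pi k$ since $M\ge 40$, so \Lm{bound} applied with $\rho\le 1/(25\pi)$ yields a probability of order $e^{-73k}$; since $k\ge 0.3\log n$ gives $m\le n\le e^{(10/3)k}$, the interior contribution is $o(e^{-9k})$. For $x$ in the boundary strip the worst case (a corner) still gives $|D(x,r)\cap S_m|\ge\tfrac14\pi r^2$, and the same lemma then produces a probability of order $e^{-15k}$; multiplying by the strip area $O(r\sqrt n)=O(\sqrt k\,e^{(5/3)k})$ again leaves $o(e^{-9k})$.

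The main point to verify is the corner case, where the Poisson concentration exponent is much weaker than in the interior. The assumption $M\ge 40$ is exactly what makes that exponent (about $15$) comfortably larger than $9+\tfrac53$, so no further subdivision of the boundary is needed and all remaining numerical estimates are routine.
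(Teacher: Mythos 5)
Your proof is correct and is essentially the paper's own argument: a first-moment bound over the Poisson points combined with \Lm{bound}, using the fact that a disc of radius $\tfrac18M\sqrt k\ge 5\sqrt k$ intersected with $S_m$ always contains a quarter-disc of area $>19k$ (which is where $M^2k\le m$ is used). The only difference is that the paper applies the quarter-disc worst case uniformly to all vertices and multiplies by $m\le e^{k/0.3}$, since $15>9+10/3$ already suffices, so your interior/boundary-strip decomposition is a harmless but unnecessary refinement.
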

Note that this does not quite follow from \Lm{small}, since reducing
the area of the square, and hence the number of vertices, could in
principle increase the number of long edges in the remaining graph.
\begin{proof}
If some vertex $v$ of $G_{m,k}$ has its $k^{\rm th}$ nearest neighbour at a distance
more than $\frac18M\sqrt k\ge 5\sqrt k$, then there must be fewer than $k$
points within a quarter-disc of area $\frac{\pi}{4}25k>19k$ inside $S_m$.
(We need to consider quarter-discs since $v$ may be close to a corner of $S_m$.
The lower bound $M^2k\le m$ ensures that the quarter-disc fits.)
By \Lm{bound}, this occurs with probability at most
$e^{(1/19-1-(1/19)\log(1/19))19k}<e^{-15k}$. The expected number of vertices
where this will occur is thus $O(me^{-15k})=o(e^{-9k})$ since $m\le n\le e^{k/0.3}$.
Thus the probability that $G_{m,k}$ contains an
edge of length at least $\frac18M\sqrt k$ is $o(e^{-9k})$.
\end{proof}

\begin{lemma}\label{l:tech}
\[\Prb(A_k)\le \Prb(A'_k)\le (4+o_k(1))\Prb(A_k),\]
\[\Prb(B_k)\le \Prb(B'_k)\le (2+o_k(1))(\Prb(A_k)+\Prb(B_k)).\]
\end{lemma}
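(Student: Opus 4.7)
The first inequality in each line is immediate from the inclusions $S'\subseteq S''$ and $R'\subseteq R''$, which give $A_k\subseteq A'_k$ and $B_k\subseteq B'_k$.

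For the upper bound $\Prb(A'_k)\le(4+o_k(1))\Prb(A_k)$, write $a=M\sqrt k$ and $\delta=\tfrac18M\sqrt k=a/8$. The plan is to cover $S''=[-3a/8,3a/8]^2$ by the four translates $T_i=S'+v_i$ with shift vectors $v_i\in\{(\pm a/8,\pm a/8)\}$; each $T_i$ sits inside $S$ with margin exactly $\delta$ from every boundary. By \Co{edges} and~(\ref{e:m1}), outside an event of probability $o(e^{-9k})$ every edge of $G_{S,k}$ has length less than $\delta$ and $G_{S,k}$ contains at most one component of diameter greater than $\delta$. A component contained in the proper subregion $S''\subsetneq S$ cannot be the spanning component (which includes Poisson points of $S\setminus S''$), so on the good event such a component has diameter at most $\delta$ and therefore fits inside some $T_i$. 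A union bound then gives
\[
\Prb(A'_k)\le\sum_{i=1}^4\Prb\bigl(G_{S,k}\text{ has a component of diameter}\le\delta\text{ inside }T_i\bigr)+o(e^{-9k}),
\]
and the goal is to show that each summand equals $\Prb(A_k)+o(e^{-9k})$ by translation invariance: shifting the Poisson process by $-v_i$ moves $T_i$ to $S'$ and $S$ to the translated square $S-v_i$, and a coupling shows that the resulting small-diameter-component-in-$S'$ events for $G_{S,k}$ and for $G_{S-v_i,k}$ agree outside an event of probability $o(e^{-9k})$. Combined with \Lm{limsup} (which gives $\Prb(A_k)\ge e^{-(8+o_k(1))k}$, so that $o(e^{-9k})=o_k(1)\Prb(A_k)$), this yields the claim.

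The bound $\Prb(B'_k)\le(2+o_k(1))(\Prb(A_k)+\Prb(B_k))$ is proved by the same strategy with a different cover of $R''=[0,3a/4]\times[-3a/8,3a/8]$. The left half $[0,a/2]\times[-3a/8,3a/8]$ of $R''$ (which meets the wall $x=0$) is covered by the two translates $R'+(0,\pm a/8)$ of $R'$; each keeps its left edge on the wall, and after a vertical shift of the Poisson process is comparable to a $B_k$ event. The right half $[3a/8,7a/8]\times[-3a/8,3a/8]$ of $R''$ (lying strictly inside $R$ with margin $\ge\delta$ from every side) is covered by the two translates $S'+(5a/8,\pm a/8)$ of $S'$; after the appropriate shift each sits inside a translate of $S$ and is comparable to an $A_k$ event. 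Summing the four contributions yields $\Prb(B'_k)\le2\Prb(A_k)+2\Prb(B_k)+o(e^{-9k})=(2+o_k(1))(\Prb(A_k)+\Prb(B_k))$.

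The hard part will be making the translation-invariance step rigorous: when comparing an event in $G_{S,k}$ with its translate in $G_{S-v_i,k}$ (and analogously for $R$), the ambient squares differ by thin strips of width $a/8$ lying at distance $\ge\delta$ from the reference copy of $S'$ or $R'$. By the strict edge-length bound of \Co{edges}, Poisson points in these strips create no edges directly into the reference copy, but they may in principle perturb the $k$-nearest-neighbour structure of vertices in the intermediate region. A coupling argument combined with \Co{edges} is needed to confine this indirect effect to an event of probability $o(e^{-9k})$, which by \Lm{limsup} is negligible against $\Prb(A_k),\Prb(B_k)\ge e^{-(8+o_k(1))k}$. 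The separate issue of discarding components of diameter greater than $\delta$ is handled by~(\ref{e:m1}).
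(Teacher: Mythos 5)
Your covering geometry is exactly the paper's: the four translates of $S'$ you use to cover $S''$ coincide with the squares $S'_i$ in the paper's proof, the $2+2$ split of $R''$ into $R$-type and $S$-type pieces matches, and absorbing error events of probability $o(e^{-9k})$ against $\Prb(A_k),\Prb(B_k)\ge e^{-(8+o_k(1))k}$ via \Lm{limsup} is also the right mechanism. But two essential steps are missing or wrongly justified. First, you deduce that "on the good event $G_{S,k}$ contains at most one component of diameter greater than $\delta$" from \eqref{e:m1}; that bound concerns $G_{n,k}$ on the ambient square of area $n$ (it is how $M$ was chosen, via \Lm{small}), not the graph on the area-$M^2k$ square $S$. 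As the remark after \Co{edges} warns, such statements do not automatically transfer to smaller squares, and here the transfer is genuinely problematic: working inside $S$ alone, the probability that the points of $S\setminus S''$ fail to assemble into a large component is only $e^{-\Theta(k)}$ with a small constant (one square of area $M^2k$ gives no boosting), which swamps $\Prb(A_k)$, and the probability of a separated component of diameter $>\delta$ inside $S''$ is a delicate quantity not controlled by anything you cite. The paper avoids this entirely by embedding everything in the ambient process on $S_n$ and introducing the event $E_3$ (some component with a vertex outside $T$ has large diameter), whose failure probability is $e^{-\omega(k)}$ only because $\Omega(n/k)$ disjoint squares avoiding $T$ are available in $S_n$; together with $E_1$ (from \eqref{e:m1}, legitimately applied to $G_{n,k}$) this is what forces the component in $S''$ to have diameter at most $\tfrac18M\sqrt k$.

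Second, the step you yourself flag as "the hard part" --- relating the small-component event for $G_{S,k}$ to an $A_k$ event for a translated square via "a coupling argument combined with \Co{edges}" --- is the actual content of the lemma, and you do not supply it. The paper never compares $G_{S,k}$ with a graph on a translate of $S$ directly; it routes both comparisons through the single ambient graph $G_{n,k}$, using two monotonicity observations: adding points (passing from $G_{S,k}$ to $G_{n,k}$) cannot create new edges between existing points, and any edge to the component from a new point would exceed the edge-length bound; deleting points (passing from $G_{n,k}$ to $G_{S_i,k}$) can only create an edge at a vertex of $S''_i$ if that vertex had, in $G_{n,k}$, an edge to a deleted and hence distant point, again contradicting the edge-length bound, which \Co{edges} provides for $G_{n,k}$ \emph{and} for each $G_{S_i,k}$ (this is why $E_2$ is stated for all five graphs). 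With the component thus transferred into $G_{S_i,k}$ and confined to $S'_i$, each of the four events has probability exactly $\Prb(A_k)$ by translation invariance, and no coupling is required. Until you both establish the smallness of the component inside $S$ (or reroute through $S_n$ as the paper does) and carry out this two-way transfer between graphs on nested regions, the proof is incomplete at its central points.
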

\begin{proof}
Both lower bounds are immediate. For the first upper bound, fix a Poisson process with
intensity 1 in the square $S_n$ of area $n$ centred at the origin.
Let $T$ be the square of side length $\frac54M\sqrt k$, also centred at the origin.
Note that for sufficiently large $k$ and $0.3\log n\le k\le 0.52\log n$, $T\subseteq S_n$,
so we shall assume this in the following.

Cover $T$ with four translates $S_1,\dots,S_4$ of $S$ as shown in \Fg{2}.
We now define three ``bad'' events. Let $E_1$ be the event that $G_{n,k}$ contains two
components of diameter greater than $\frac18M\sqrt k$. By \eqref{e:m1} we know that
$\Prb(E_1)=o(e^{-9k})$. Let $E_2$ be the event that some edge in either $G_{n,k}$
or in one of the $G_{S_i,k}$ is of length greater than $\frac18M\sqrt k$.
By \Co{edges}, $\Prb(E_2)=o(e^{-9k})$. Finally, let $E_3$ be the event that there
is no component in $G_{n,k}$ with at least one vertex outside of $T$ and with
diameter greater than $\frac18M\sqrt k$. Note that if we divide some square $\tilde S$
in $S_n$ of area $M^2k$ into $(8M)^2$ small squares, each of side length $\frac18\sqrt k$,
then with probability bounded away from zero (independently of $k$), there will be at
least one, and at most $\frac{k}{21}$ vertices in each small square. But then it is
easy to see that every vertex in a small square is adjacent in $G_{n,k}$
to every vertex in any neighbouring small square, provided these squares are at least
distance $\frac{3}{8}\sqrt k$ from the boundary of $\tilde S$ (see \Fg{3}). In
this case, there will be a large component of $G_{n,k}$ intersecting $\tilde S$.
Since we can place $\Omega(n/k)=\omega(k)$ independent copies of $\tilde S$ in $S_n$,
all avoiding $T$, we see that $\Prb(E_3)=e^{-\omega(k)}$. In particular,
$\Prb(E_3)=o(e^{-9k})$.

Assume the event $A'_k$ occurs, i.e., there is a small component $C$ of $G_{S,k}$ inside
$S''=\frac{3}{4}S$. Assume also that $E=E_1\cup E_2\cup E_3$ does not hold.
Then $C$ must also be a component (or a union of components) in $G_{n,k}$,
since the addition of vertices outside of $S$ will not cause any new edge to form
within $S$, and no vertex outside of $S$ can be joined to a vertex in $S''$,
since this edge would be of length greater than $\frac{1}{8}M\sqrt k$ in $G_{n,k}$.
Since $E_3$ and $E_1$ do not hold, there is no component of $G_{n,k}$
of diameter greater than $\frac18M\sqrt k$ entirely within $T$. Thus $C$ is of diameter
at most $\frac{1}{8}M\sqrt k$. Since $C$ lies
inside $S''$, it must lie entirely within at least one of the four translates $S'_i$ of
$S'$ corresponding to the $S_i$. (For example, if $C$ contains any vertex in the
top left quadrant of $S''$, then the whole component must lie in $S'_1$ in \Fg{2}.)
No edge occurs in $E(G_{S_i,k})\setminus E(G_{n,k})$ between vertices
within $S''_i$, since otherwise there would be an edge from a vertex in $S''_i$
to $S_n\setminus S_i$ in $G_{n,k}$ of length greater than $\frac18M\sqrt k$.
Since no edge of $G_{S_i,k}$ is longer than $\frac18M\sqrt k$, no such edge
joins a vertex in $S'_i$ to a vertex outside $S''_i$. Thus $C$ remains a component
in $G_{S_i,k}$ and lies entirely within $S'_i$. Hence one of the events $A_k$
corresponding to the four copies $S_i$ of $S$ occurs. Thus
$\Prb(A'_k\setminus E)\le 4\Prb(A_k)$ and so $\Prb(A'_k)\le 4\Prb(A_k)+\Prb(E)$.
But $\Prb(E)=o(e^{-9k})$, so by \Lm{limsup}, $\Prb(A'_k)\le(4+o_k(1))\Prb(A_k)$.

The upper bound for $\Prb(B'_k)$ is similar. In this case, the squares $T$ and $S_n$
are both aligned so as to share part of their leftmost boundaries with $R$ (see \Fg{2}).
The region $R''$ is covered by four central squares $R'_1$, $R'_2$, $S'_1$, and $S'_2$,
of the four squares $R_1$, $R_2$, $S_1$, and $S_2$, all of which lie in $T$.
There are two possibilities. Either our small component $C$ in
$R''$ lies in the left half of $R$, and hence in one of the $R_i'$, an event which has
probability at most $(2+o_k(1))\Prb(B_k)$ by an argument similar to the one above.
The other possibility is that the small component strays
into the right half of $R$, and so lies in one of the $S_i'$, an event with probability
at most $(2+o_k(1))\Prb(A_k)$. This proves the lemma.
\end{proof}

\begin{figure}
\centerline{\includegraphics{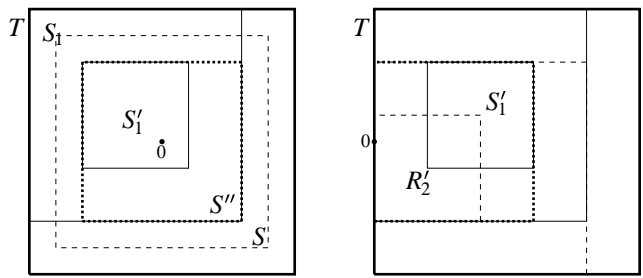}}
\caption{Left: Square $T$ is covered by squares $S_1,\dots,S_4$ aligned to the four
corners of $T$ (solid thin line, only $S_1$ shown). The
smaller squares $S'_i$ (solid thin line) then cover $S''$ (dotted line). The square $S$
(dashed line) is also shown. Right: corresponding picture for $B'_k$, with $R''$
(dotted line) covered by squares $S'_1,S'_2,R'_1,R'_2$ (only $S'_1$ and $R'_2$ shown).}
\label{f:2}
\end{figure}

Now we will restrict attention to $A_k$, $p_1(k)$ and $f_1(k)$.
Fix $0<\eps<\tfrac12$ and $M$ and choose $N=N(\eps,M)\gg M^2/\eps$.
We will consider $k\gg M,N$ to be a very large fixed integer. Now tile the
$M\sqrt{k}\times M\sqrt{k}$ square $S$, centred at 0, with $(MN)^2$ squares of
side length $\ell=\sqrt{k}/N$ and hence area $\ell^2=k/N^2$.

Next we wish to define a {\em configuration}. For a fixed instance of $\cP_S$, we label
each small square $S_i$ with the {\em approximate density\/} $d(S_i)$ of points in $S_i$,
where $d(S_i)$ is defined precisely by the formula
\begin{equation}\label{dSi}
d(S_i)=
\begin{cases}
0&\text{if $S_i$ contains no points of $\cP_A$}\\
\frac{\lceil N^3r/k\rceil}{N}&\text{if $S_i$ contains $r$ points of $\cP_A$, where $r\le k$}\\
\infty&\text{if $S_i$ contains more than $k$ points of $\cP_A$.}\\
\end{cases}
\end{equation}
We call such a labelled square $S$ a {\em configuration} $F$, and we say that $\cP_S$ has
(or belongs to) type $F$. Note that the total number of configurations is exactly
\[
(N^3+2)^{(MN)^2}.
\]
The aim is that the configuration $F$ should contain enough information about $\cP_S$
to determine whether or not $A_k$ occurs up to a small error, while the set of
all possible configurations is nevertheless finite.

The next step is to identify a set of undesirable, or {\em bad}, configurations, and
discard them. Of course, we are really discarding all instances of $\cP_S$ which belong
to a bad configuration, but we will think of discarding the configurations themselves,
and speak, for instance, of the measure of a set $\cF$ of configurations when
we mean the probability that $\cP_S$ belongs to some $F\in\cF$.

For an instance $\cP_S$ of the Poisson process in $S$, let $F(\cP_S)$ be the
configuration it belongs to. There will be two types of bad configuration in total.

\noindent{\bf Type A.} These are configurations which contain a square $S_i$ with
$d(S_i)>N^2/21$. (We may assume that 21 divides $N$ so that $N^2/21$ is an integer.)
In this case $S_i$ contains at least $k/21$ points.
\Lm{bound} shows that the probability $p_{A}$ that we have such a
square anywhere in $S$ is bounded by
\begin{align*}
p_{A}&\le (MN)^2\Prb(\Po(k/N^2)\ge k/21)\\
&\le (MN)^2e^{k/N^2(N^2/21-1-(N^2/21)\log N^2/21)}\\
&<(MN)^2e^{k(1-\log N^2/21)/21}=o(e^{-9k}),
\end{align*}
as long as $N>(21e^{190})^{1/2}$.

\noindent{\bf Type B.} We consider the set $\Sigma$ of circles whose centres are centres
of small squares and which pass through at least one other centre of a small square of our
tiling. Clearly, $|\Sigma|\le (MN)^4$. For each $\Gamma\in\Sigma$, let $R_{\Gamma}$ be the
set of squares $S_i$ that lie entirely within distance $\frac52\ell\sqrt 2$ of~$\Gamma$,
where $\ell=\sqrt k/N$ is the side length of the small squares. Type B configurations are those
for which, for some $\Gamma\in\Sigma$,
\begin{equation}\label{TypeB}
 \frac{k}{N^2}\sum_{S_i\in R_{\Gamma}}d(S_i)\ge \frac{\eps k}{2}.
\end{equation}
Write $c(\Gamma)$ and $r(\Gamma)$ for the centre and radius of $\Gamma$, and let
$\Gamma^t$ be the circle with centre $c(\Gamma)$ and radius $r(\Gamma)+t$. Then
since $|S\cap\Gamma^t|\le |\partial S|=4M\sqrt{k}$ for all $t\ge -r(\Gamma)$, we see that
the area $|R_{\Gamma}|$ of each $R_{\Gamma}$ is at most
\[
  |R_{\Gamma}|\le\int_{-5\ell/\sqrt 2}^{+5\ell/\sqrt 2}(S\cap\Gamma^t)\ dt
  \le\int_{-5\ell/\sqrt 2}^{+5\ell/\sqrt 2}4M\sqrt{k}\ dt=(5\ell\sqrt 2)(4M\sqrt{k})<30Mk/N.
\]

Thus each $R_{\Gamma}$ contains at most $30MN$ squares.
Therefore, if \eqref{TypeB} holds for some $R_{\Gamma}$, then that $R_{\Gamma}$
contains at least
\[
 \frac{\eps k}{2}-\frac{30Mk}{N^2}=k\left(\frac{\eps}{2}-\frac{30M}{N^2}\right)
\]
points. Thus for $N\ge N_1(\eps,M)=(180M/\eps)^{1/2}$, the $R_{\Gamma}$ chosen above must contain at least
$\frac{\eps k}{3}$ points. Thus by \Lm{bound} the probability $p_B$ that $\cP_A$ belongs
to a Type B configuration is bounded by
\begin{align*}
p_B &\le (MN)^4\Prb({\rm Po}(30Mk/N)\ge\eps k/3)\\
&\le (MN)^4e^{\frac{30Mk}{N}\left(\frac{\eps N}{90M}-1-\frac{\eps N}{90M}\log\left(
\frac{\eps N}{90M}\right)\right)}\\
&< (MN)^4e^{\frac{\eps k}{3}\left(1-\log\left(\frac{\eps N}{90M}\right)\right)}
=o(e^{-9k}),
\end{align*}
as long as $N\ge N_2(\eps,M)$. We shall also assume $N\ge N_3(\eps,M)=M^2/2\eps$
for the next lemma.

\begin{lemma}\label{l:key}
 Suppose that $F$ is a good configuration, that $S_1$ and $S_2$ are two squares in $S$,
 and that $\cP$ and $\cP'$ are two point sets belonging to $F$.
 If there is no edge in $G_{S,k}(\cP)$ from any vertex in $S_1$
 to any vertex in $S_2$, then there is no edge in $G_{S,k(1-\eps)}(\cP')$
 from any vertex in $S_1$ to any vertex in $S_2$.
 \end{lemma}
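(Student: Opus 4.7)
We argue by contradiction. If the conclusion fails, by symmetry between $S_1$ and $S_2$ we may find $v'\in\cP'\cap S_1$ and $w'\in\cP'\cap S_2$ such that $w'$ is one of the $k(1-\eps)$ nearest neighbours of $v'$ in $\cP'$, so the open disc $D':=D(v',d(v',w'))$ satisfies $|D'\cap\cP'|\le k(1-\eps)$. Because $\cP$ and $\cP'$ share the type $F$, both $\cP\cap S_1$ and $\cP\cap S_2$ are nonempty; fix $v\in\cP\cap S_1$ and $w\in\cP\cap S_2$, and set $D:=D(v,d(v,w))$. The hypothesis that no edge of $G_{S,k}(\cP)$ runs between $S_1$ and $S_2$ forces $w$ not to be among the $k$ nearest neighbours of $v$, whence $|D\cap\cP|\ge k+1$. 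Hence $|D\cap\cP|-|D'\cap\cP'|\ge\eps k+1$, and it will suffice to prove the reverse inequality $|D\cap\cP|-|D'\cap\cP'|<\eps k$.

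The geometric step is to compare both discs to a single circle in $\Sigma$: let $\Gamma$ be the circle centred at the centre of $S_1$ and passing through the centre of $S_2$. Since $v,v'\in S_1$ and $w,w'\in S_2$, each is within $\ell\sqrt{2}/2$ of the relevant centre, so both $|d(v,w)-r(\Gamma)|$ and $|d(v',w')-r(\Gamma)|$ are at most $\ell\sqrt{2}$. A short triangle-inequality computation then yields $d(p,\Gamma)\le\tfrac{3}{2}\ell\sqrt{2}$ for every $p\in\partial D\cup\partial D'$. A consequence, using that small squares have diameter $\ell\sqrt{2}$, is the following: every small square that straddles $\partial D$, every small square that straddles $\partial D'$, and every small square that lies in $D\setminus D'$ or $D'\setminus D$ is entirely within distance $\tfrac{5}{2}\ell\sqrt{2}$ of $\Gamma$, and hence lies in $R_\Gamma$.

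Let $I_D$ and $I_{D'}$ denote the small squares fully contained in $D$ and $D'$, respectively. The previous paragraph gives $I_D\setminus I_{D'}\subseteq R_\Gamma$ and shows that the squares straddling $\partial D$ also lie in $R_\Gamma$. Dropping the non-positive $I_{D'}\setminus I_D$ contribution, we obtain
\[
 |D\cap\cP|-|D'\cap\cP'|\ \le\ \sum_{T\in I_D\cap I_{D'}}\bigl(|T\cap\cP|-|T\cap\cP'|\bigr)\ +\ |R_\Gamma\cap\cP|,
\]
where the second term absorbs both the straddling squares of $\partial D$ and the squares in $I_D\setminus I_{D'}$, which are disjoint families of subsets of $R_\Gamma$. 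The first sum is bounded by $|I_D\cap I_{D'}|\cdot k/N^3=O(M^2 k/N)$, because $\cP$ and $\cP'$ sharing the type $F$ implies the counts $|T\cap\cP|$ and $|T\cap\cP'|$ differ by less than $k/N^3$ on each small square. The second term is at most $\sum_{S_i\in R_\Gamma}d(S_i)\cdot k/N^2<\tfrac{\eps k}{2}$ by the definition of a good Type B configuration. For $N\gg M^2/\eps$ the right-hand side is strictly less than $\eps k$, delivering the required contradiction.

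The main obstacle, beyond bookkeeping, is packaging the three distinct sources of error---the discretisation error between $\cP$ and $\cP'$ on common interior squares, and the two geometric errors caused by $\partial D\neq\partial D'$---so that all the geometric pieces are absorbed into the Type B bound on a \emph{single} circle of $\Sigma$. This is precisely why $\Sigma$ is defined to include every circle joining two small-square centres and why $R_\Gamma$ has the generous $\tfrac{5}{2}\ell\sqrt{2}$ buffer: together they guarantee that the single circle $\Gamma$ chosen above covers every small square at which the geometric error could matter.
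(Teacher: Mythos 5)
Your proof is correct and follows essentially the same route as the paper's: a contradiction argument comparing the disc around a $\cP$-point of $S_1$ out to a $\cP$-point of $S_2$ with the corresponding disc for $\cP'$, using the same circle $\Gamma\in\Sigma$ through the two square centres, the non-Type-B bound on $R_\Gamma$ to absorb all geometric discrepancies, and the non-Type-A/shared-label fact that per-square counts differ by less than $k/N^3$. The only difference is bookkeeping: you compare counts square-by-square on the common interior squares plus $R_\Gamma$, whereas the paper sandwiches the discs between $R_0$ and $R_0\cup R_\Gamma$ and transfers the count from $\cP'$ to $\cP$, which is the same estimate in a slightly different arrangement.
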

\begin{proof}
If either $S_1$ or $S_2$ is empty in $\cP$ then the same square will be empty in $\cP'$,
so that in both cases there will be no edges from $S_1$ to $S_2$. Otherwise, pick
$x_1\in \cP\cap S_1$ and $x_2\in \cP\cap S_2$. Suppose for a contradiction that there are
$y_1\in \cP'\cap S_1$ and $y_2\in \cP'\cap S_2$ such that $y_1y_2\in E(G_{k(1-\eps)}(\cP'))$.
Without loss of generality, $y_2$ is one of the $k(1-\eps)$ nearest neighbours of $y_1$.
Let $z_1$ and $z_2$ be the centre points of $S_1$ and $S_2$ respectively
and let $\ell=\frac{\sqrt k}{N}$ be the side length of the small squares.
Let $d=\|z_1-z_2\|$ be the distance between $z_1$ and $z_2$. Now
$\|z_i-y_i\|\le \frac12\ell\sqrt 2$, and $\|z_i-x_i\|\le\frac12\ell\sqrt 2$,
so
\[
 B(x_1,\|x_2-x_1\|)\subseteq B(x_1,d+\ell\sqrt2)\subseteq B(z_1,d+\tfrac32\ell\sqrt2)
\]
and
\[
 B(y_1,\|y_2-y_1\|)\supseteq B(y_1,d-\ell\sqrt2)\supseteq B(z_1,d-\tfrac32\ell\sqrt2)
\]
where $B(x,r)$ denotes the disc or radius $r$ about the point $x$.
Now, every square that meets $B(z_1,d-\tfrac52\ell\sqrt2)$ lies inside
$B(z_1,d-\tfrac32\ell\sqrt2)$, and every square that meets
$B(z_1,d+\tfrac32\ell\sqrt2)$ lies inside $B(z_1,d+\tfrac52\ell\sqrt2)$.
Let $R_0$ be the union of the squares meeting
$B(z_1,d-\tfrac52\ell\sqrt2)$ and let $\Gamma\in\Sigma$ be the circle
through $z_2$ centred at $z_1$. Recall that $R_\Gamma$ consists of all the squares
strictly contained in $B(z_1,d+\tfrac52\ell\sqrt2)\setminus B(z_1,d-\tfrac52\ell\sqrt2)$.
Therefore
\[
 R_0\subseteq B(y_1,\|y_2-y_1\|)\qquad\text{and}\qquad
 B(x_1,\|x_2-x_1\|)\subseteq R_0\cup R_\Gamma.
\]
But $B(y_1,\|y_2-y_1\|)$ (and hence $R_0$) contains at most $k(1-\eps)$
points of $\cP'$ and $R_\Gamma$ contains at most $\eps k/2$ points
of $\cP'$,
since $F$ is not of Type B.
Thus $R_0\cup R_\Gamma$ contains at most $k(1-\eps/2)$
points of $\cP'$. Since no square has $d(S_i)=\infty$
(because $F$ is not of Type A), this implies
$R_0\cup R_\Gamma$ (and hence $B(x_1,\|x_2-x_1\|)$) contains at most
\[
 k(1-\eps/2)+(1/N)|R_0\cup R_\Gamma|\le k(1-\eps/2)+(1/N)M^2 k<k
\]
points of $\cP$. Thus $x_2$ is one of the $k$ nearest neighbours
of $x_1$ in $G_{S,k}(\cP)$, contradicting the assumption that
$G_{S,k}(\cP)$ contains no edge between $S_1$ and $S_2$.
\end{proof}

Let $\cF$ be a set of configurations. Write $I(\cF)$ for the event that
$\cP$ belongs to some $F\in\cF$. Also, let $\cG$ be the set of good
configurations.

\begin{figure}
\centerline{\includegraphics{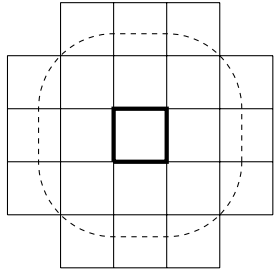}}
\caption{Any two points in the centre square are joined in $G_{S,k}$ provided
there are not more than $k$ points in the union of the 21 squares shown.}
\label{f:3}
\end{figure}

\begin{lemma}\label{l:engine}
There is a subset $\cY\subseteq\cG$ of configurations such that
\[
 A_k\cap I(\cG)\subseteq I(\cY)
 \subseteq A'_{k(1-\eps)}\cap I(\cG).
\]
\end{lemma}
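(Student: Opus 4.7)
The plan is to take
\[
  \cY=\{F\in\cG : \text{some point set $\cP'$ of type $F$ lies in $A_k$}\}.
\]
The first inclusion $A_k\cap I(\cG)\subseteq I(\cY)$ is then immediate: if $\cP\in A_k\cap I(\cG)$, then $F(\cP)\in\cG$ and $\cP$ itself serves as the required witness, so $F(\cP)\in\cY$ and $\cP\in I(\cY)$.

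For the second inclusion, fix $\cP$ of some type $F\in\cY$ and choose a witness $\cP'$ of the same type with $\cP'\in A_k$. Let $C'$ be a component of $G_{S,k}(\cP')$ with every vertex inside $S'=\tfrac12 S$, write $\mathcal{S}_1$ for the collection of small squares containing at least one vertex of $C'$, and let $\mathcal{S}_2$ be all the other small squares in $S$. Since $C'$ is a component, $G_{S,k}(\cP')$ has no edge from $\mathcal{S}_1$ to $\mathcal{S}_2$. Apply \Lm{key} with the roles of $\cP$ and $\cP'$ exchanged (both share the good type $F$): for every pair $(S_i,S_j)\in\mathcal{S}_1\times\mathcal{S}_2$, the graph $G_{S,k(1-\eps)}(\cP)$ has no edge from $S_i$ to $S_j$ either. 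Hence every component of $G_{S,k(1-\eps)}(\cP)$ meeting $\bigcup\mathcal{S}_1$ is confined to $\bigcup\mathcal{S}_1$; and at least one such component exists because each $S_i\in\mathcal{S}_1$ has $d(S_i)>0$ in $F$ (a point of $\cP'$ lies in it), so $\cP$ must also have a point there.

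It remains to check $\bigcup\mathcal{S}_1\subseteq S''=\tfrac34 S$. Every square in $\mathcal{S}_1$ contains a vertex of $C'\subseteq S'$ and has side length $\ell=\sqrt k/N$, so it lies within the closed $\ell\sqrt 2$-neighbourhood of $S'$. Since $N\gg M$, this neighbourhood sits comfortably inside the margin of width $\tfrac18 M\sqrt k$ that separates $S'$ from $S''$; therefore $\bigcup\mathcal{S}_1\subseteq S''$, and $A'_{k(1-\eps)}$ occurs for $\cP$.

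The main subtlety is the direction in which \Lm{key} is applied: the ``sharper'' graph $G_{S,k}$ lives on the \emph{witness} $\cP'$, while the ``blunter'' $G_{S,k(1-\eps)}$ lives on the generic $\cP$, so one must swap the roles in \Lm{key} to convert an $A_k$-witness into an $A'_{k(1-\eps)}$-event. Once this is sorted, everything else is bookkeeping: the discretisation scale $\ell=\sqrt k/N$ is negligible against the margin $\tfrac18 M\sqrt k$, and agreement of the coarse densities $d(S_i)$ ensures non-emptiness in the squares that matter.
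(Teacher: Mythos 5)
Your route is essentially the paper's own: the same choice of $\cY$, the same reduction to ``no edges between the squares meeting the small component and the remaining squares'', the same application of \Lm{key}, and the same non-emptiness observation. But there is a genuine gap at the assertion that ``since $C'$ is a component, $G_{S,k}(\cP')$ has no edge from $\mathcal{S}_1$ to $\mathcal{S}_2$.'' A square of $\mathcal{S}_1$ is only known to contain \emph{some} vertex of $C'$; it may also contain vertices not in $C'$, and nothing you have said prevents such a vertex from sending an edge into $\mathcal{S}_2$. This is exactly where the goodness of $F$ must be used directly, not only inside \Lm{key}: since $F$ is not of Type A, no small square holds more than about $k/21$ points, so fewer than $k$ points lie within distance $\ell\sqrt2$ of any point (the $21$-square picture of \Fg{3}), and hence all points of $\cP'$ lying in one small square are mutually adjacent in $G_{S,k}(\cP')$. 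Only then does it follow that every vertex in a square of $\mathcal{S}_1$ belongs to $C'$, so that an edge from $\mathcal{S}_1$ to $\mathcal{S}_2$ would leave the component $C'$, a contradiction. Without this step the hypothesis of \Lm{key} is simply not available, so the argument does not go through as written.

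A second, smaller slip is in the final containment: you measure the margin between $S'$ and $S''$ with both regions scaled by $\sqrt k$. But $A'_{k(1-\eps)}$ is the event defined with parameter $k(1-\eps)$, so its central region has half-side $\tfrac38 M\sqrt{k(1-\eps)}$ rather than $\tfrac38 M\sqrt k$; the inequality actually needed is $\tfrac34\sqrt{k(1-\eps)}>\tfrac12\sqrt k+\ell\sqrt2$, which holds precisely because $\eps<\tfrac12$ and $N$ is large. This scaling matters later, when $\Prb(A'_{k(1-\eps)})$ is compared with $p_1(k(1-\eps))$ via \Lm{tech}, and it is the reason the hypothesis $\eps<\tfrac12$ appears at all. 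Finally, the ``role swap'' in \Lm{key} that you flag as the main subtlety is just the paper's own application with the labels of the two point sets interchanged; both are arbitrary point sets of the same good type, so no extra care is needed there.
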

\begin{proof}
Set
\[
 \cY=\{F\in\cG: A_k\cap I(\{F\})\ne\emptyset\},
\]
so that
\[
 A_k\cap I(\cG)\subseteq I(\cY)
\]
automatically holds. Suppose that $\cP$ belongs to a good configuration $F$. If $A_k$
occurs then $A'_{k(1-\eps)}$ occurs for every $\cP'$ belonging to the same $F$.
For suppose that $\cP$ is a point set for which $A_k$ occurs, and let $T$ be the set of
squares of $S$ containing a point of the component $C$ lying within $S'$. Since $F$
is not of Type~A, there are less than $k$ points within distance $\ell\sqrt2=\sqrt{2k}/N$ of any
point of $\cP$, and hence any point of $\cP$ in any square of our tiling is connected
to all other points of $\cP$ in the same square (see \Fg{3}).
Hence there is no edge in $G_{S,k}(\cP)$ from
any square of $T$ to any square of $S\setminus T$. By \Lm{key}, for any $\cP'$ belonging to $F$
there is thus no edge in $G_{k(1-\eps)}(\cP')$ from any square of $T$ to any square of
$S\setminus T$. Therefore, there is some component contained in $T$ in $G_{k(1-\eps)}(\cP')$.
This component lies within the enlarged central region $S''$ for the event $A'_{k(1-\eps)}$,
since $\tfrac34\sqrt{k(1-\eps)}>\tfrac12\sqrt k+\ell\sqrt2$ for $\eps<\tfrac12$ and large $N$.
Therefore, $A'_{k(1-\eps)}$ occurs for any $\cP'$ belonging to~$F$.
\end{proof}

\begin{lemma}\label{l:pconf}
 For any good configuration $F$, $\Prb(I(\{F\}))=e^{-(\theta_F+o(1))k}$ as $k\to\infty$,
 where $\theta_F$ is some constant depending on~$F$.
\end{lemma}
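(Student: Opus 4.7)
The plan is to decompose the event $I(\{F\})$ according to the exact Poisson counts in each small square $S_i$, apply \Lm{plogp} to each term, and verify that the sum is dominated (up to $e^{o(k)}$) by its largest contribution, yielding a clean exponential rate $\theta_F$.

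Explicitly, saying that $\cP_S$ has type $F$ is the same as specifying, for each of the $(MN)^2$ small squares, that $r_i:=|\cP_S\cap S_i|$ lies in the set $I_i$ of integers compatible with $d(S_i)$: if $d(S_i)=0$ then $I_i=\{0\}$, while if $d(S_i)=j/N$ for some $1\le j\le N^2/21$ then $I_i$ is the set of integers in $((j-1)k/N^3,\,jk/N^3]$, so $|I_i|\le\lceil k/N^3\rceil$. Since $F$ is good (not of Type~A), $d(S_i)\ne\infty$, so every $I_i$ is finite. Independence of $\cP_S$ on disjoint regions then gives
\[
\Prb(I(\{F\}))=\sum_{(r_i)\in\prod_iI_i}\prod_i\Prb\bigl(|\cP_S\cap S_i|=r_i\bigr).
\]

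For each tuple, set $\rho_i=r_iN^2/k$, so $\rho_i|S_i|=r_i\in\Z$ and \Lm{plogp} applies. Since the number of regions $(MN)^2$ is fixed and the total expected count is $M^2k$, the error in \Lm{plogp} is $O(\log k)$, and the probability of the single-tuple event is $\exp\{(k/N^2)\sum_ig(\rho_i)+O(\log k)\}$, where $g(\rho)=\rho-1-\rho\log\rho$ is extended continuously by $g(0)=-1$. Let $J_i$ be the closed interval of possible $\rho_i$'s: either $\{0\}$ if $d(S_i)=0$, or $[\max(d(S_i)-1/N,0),\,d(S_i)]\subseteq[0,N^2/21]$ otherwise. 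Define
\[
\theta_F=-\frac{1}{N^2}\sum_{i=1}^{(MN)^2}\max_{\rho\in J_i}g(\rho),
\]
a real constant depending only on $F$.

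For the upper bound, each tuple contributes at most $e^{-\theta_Fk+O(\log k)}$, and the number of tuples is $\prod_i|I_i|\le(k/N^3+1)^{(MN)^2}=e^{O(\log k)}$; hence $\Prb(I(\{F\}))\le e^{-(\theta_F+o(1))k}$. For the matching lower bound I would pick, in each $J_i$ with $d(S_i)\ge 1/N$, an integer $r_i^\dagger\in I_i$ whose associated $\rho_i^\dagger=r_i^\dagger N^2/k$ approximates the argmax of $g$ on $J_i$ to within $O(N^2/k)$; uniform continuity of $g$ on the compact set $[0,N^2/21]$ ensures $g(\rho_i^\dagger)=\max_{\rho\in J_i}g(\rho)-o_k(1)$, so the single corresponding term contributes at least $e^{-(\theta_F+o(1))k}$. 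The one mild subtlety is the case $d(S_i)=1/N$, for which $J_i$ reaches $\rho=0$ and $g'(\rho)=-\log\rho$ blows up; however $g$ itself extends continuously to $\rho=0$, so uniform continuity on $[0,N^2/21]$ still holds and the approximation goes through.
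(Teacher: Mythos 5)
Your proposal is correct and follows essentially the same route as the paper: decompose $I(\{F\})$ over the exact counts compatible with each label, apply \Lm{plogp} term by term, absorb the $e^{O(\log k)}$ number of tuples into the error, and define $\theta_F$ via the maximizer of $\rho-1-\rho\log\rho$ over the density interval consistent with $d(S_i)$, using continuity to pass from the discrete to the continuous maximizer. (Only a trivial slip: for $d(S_i)=j/N$ the index ranges over $1\le j\le N^3/21$, not $N^2/21$; this does not affect the argument.)
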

\begin{proof}
By \Lm{plogp} the probability of there being exactly $\rho_i(k/N^2)$ points
in each square $S_i$ is
\[
 \exp\left\{\sum (\rho_i-1-\rho_i\log\rho_i)|S_i|+O((MN)^2\log((MN)^2k))\right\}
\]
where we have used the fact that $\rho_i(k/N^2)<k$. To calculate the probability
of the configuration $F$ occurring, we sum over all possible values of each
$\rho_i$ consistent with the specified value of $d(S_i)$. Since there are
at most $N^2k$ values of $\rho_i$ for each $i$, we get
\[
 \Prb(I(\{F\})=\exp\left\{\sum (\tilde\rho_i-1-\tilde\rho_i\log\tilde\rho_i)|S_i|
 +O((MN)^2\log((MN)^2k\cdot N^2k))\right\}
\]
where $\tilde\rho_i$ is the value of $\rho_i$ that maximizes
$\rho_i-1-\rho_i\log\rho_i$. (The sum is at least the maximum, and at
most the number of terms $(N^2k)^{(MN)^2}$ times the maximum).
Now let $\rho'_i$ be the real number that maximizes
$\rho_i-1-\rho_i\log\rho_i$ in the range of densities consistent
with $d(S_i)$ for any $k$, so $\rho'_i=d(S_i)$ when $d(S_i)\le1$
and $d(S_i)-1/N$ when $d(S_i)>1$. Now
$|\rho_i-\tilde\rho_i|\le N^2/k$ which tends to 0 as $k\to\infty$.
Thus the difference between
$\tilde\rho_i-1-\tilde\rho_i\log\tilde\rho_i$ and
$\rho'_i-1-\rho'_i\log\rho'_i$ is $o_k(1)$. Hence
\[
 \Prb(I(\{F\})=\exp\left\{\sum (\rho'_i-1-\rho'_i\log\rho'_i)|S_i|
 +o(M^2k)\right\}
\]
Setting $\theta_F=-\sum (\rho'_i-1-\rho'_i\log\rho'_i)(1/N^2)$
gives the result.
\end{proof}

\Lm{pconf} implies
\[
 \Prb(I(\cY))=e^{-(\theta+o(1))k}
\]
where
\[
 \theta=\min_{F\in\cY} \theta_F,
\]
since, loosely speaking, the sum of a finite number of (essentially)
exponential functions is (essentially) equal to the one among them
with the least decay rate. Therefore, by \Lm{limsup}, \Lm{tech} and
\Lm{engine},
\[
(4+o(1))p_1(k(1-\eps))\ge e^{-(\theta+o(1))k}\ge p_1(k)-o(e^{-9k})=p_1(k)(1-o(1)).
\]
Finally,
\[
\limsup_{k\to\infty}f_1(k)
=\limsup_{k\to\infty}-\frac{\log((4+o(1))p_1(k(1-\eps)))}{k(1-\eps)}
\le \frac{\theta k}{k(1-\eps)}
=\frac{\theta}{1-\eps},
\]
and
\[
\liminf_{k\to\infty}f_1(k)
=\liminf_{k\to\infty}-\frac{\log(p_1(k))}{k}
\ge \frac{\theta k}{k}
=\theta,
\]
By letting $\eps\to0$ we see that $f_1(k)$ converges to a limit $c_1$.

Now we turn to $c_2$.
We may reuse the same configurations and good configurations to obtain a version of
\Lm{engine} (with an almost identical proof) with $A_k$ and $A'_{k(1-\eps)}$ replaced
by $B_k$ and $B'_{k(1-\eps)}$ respectively. \Lm{limsup}, \Lm{tech} and \Lm{engine}
now give, for some $\theta'=\theta'(\eps)$,
\[
(2+o(1))(p_1(k(1-\eps))+p_2(k(1-\eps)))\ge e^{-(\theta'+o(1))k}\ge p_2(k)-o(e^{-9k})=p_2(k)(1-o(1)).
\]
Hence
\[
(4+o(1))\max\{p_1(k(1-\eps)),p_2(k(1-\eps))\}\ge p_2(k)(1-o(1)),
\]
and so
\[
\limsup_{k\to\infty}\min\{f_1(k),f_2(k)\}\le\min\left\{\frac{\theta'}{1-\eps},c_1\right\},
\]
and
\[
\liminf_{k\to\infty}\min\{f_1(k),f_2(k)\}\ge\min\{\theta',c_1\}.
\]
By letting $\eps\to0$ we see that $\min\{f_1(k),f_2(k)\}$ converges to a limit $c_2$.

\section{Proof of main theorem}

Write $c_{\text{crit}}=\max\{\frac{1}{c_1},\frac{1}{2c_2}\}$.

\begin{theorem}\label{t:final}
 If\/ $c<c_{\rm{crit}}$ and\/ $k=\lfloor c\log n\rfloor$
 then $\Prb(G_{n,k}$ is connected$)\to0$ as $n\to\infty$.
 If\/ $c>c_{\rm{crit}}$ and\/ $k=\lfloor c\log n\rfloor$
 then $\Prb(G_{n,k}$ is connected$)\to1$ as $n\to\infty$.
\end{theorem}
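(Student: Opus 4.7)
My plan for proving \Th{final} consists of two tiling arguments, one for each direction, both relying on \Lm{small} (which confines the obstructions to connectivity to small components of diameter at most $c'\sqrt{\log n}$) and on the choice of $M$, which ensures that $\tfrac14 M\sqrt k\ge c'\sqrt{\log n}$ whenever $k\ge 0.3\log n$. This powers a \emph{buffer argument}: with probability $1-O(n^{-L})$ no edge of $G_{n,k}$ has length exceeding $c'\sqrt{\log n}$, so for any vertex lying in the central square $S'$ of a copy of $S$ embedded inside $S_n$, its $k$ nearest neighbours in $\cP$ all lie in the enclosing copy of $S$ and hence coincide with its $k$ nearest neighbours in $\cP\cap S$. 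It follows that a component of $G_{S,k}$ contained in $S'$ is exactly a component of $G_{n,k}$, and conversely. An identical statement holds for $R$-tiles placed against the boundary of $S_n$.

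For $c>c_{\text{crit}}$, I would cover $S_n$ by a fine grid of overlapping translates of $S$, together with translates of $R$ aligned to each side of $S_n$, spaced so that every region of diameter at most $c'\sqrt{\log n}$ lies inside the central $S'$ (or $R'$) of at least one tile. There are $O(n/k)$ interior tiles and $O(\sqrt{n/k})$ boundary tiles. If $G_{n,k}$ is disconnected, \Lm{small} gives a small component $C$, and the buffer argument identifies $C$ with a witness of $A_k$ or $B_k$ in some tile. A union bound, using $p_1(k)=n^{-c_1c+o(1)}$ and $p_2(k)\le n^{-c_2c+o(1)}$, bounds the probability of disconnection by
\[
O\Bigl(\tfrac{n}{\log n}\cdot n^{-c_1c+o(1)}\Bigr)
+O\Bigl(\sqrt{\tfrac{n}{\log n}}\cdot n^{-c_2c+o(1)}\Bigr)+O(n^{-L}),
\]
and both principal terms tend to zero because $c>1/c_1$ and $c>1/(2c_2)$.

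For $c<c_{\text{crit}}$, I would instead pick $\Theta(n/k)$ pairwise disjoint interior translates of $S$ and $\Theta(\sqrt{n/k})$ pairwise disjoint translates of $R$ along one side of $S_n$, so that the corresponding $A_k$ and $B_k$ events are \emph{independent} by the Poisson property. If $c<1/c_1$ I apply the argument to the interior tiles: since $p_1(k)\cdot\Theta(n/k)=n^{1-c_1c+o(1)}/\log n\to\infty$, with probability tending to one some tile exhibits $A_k$, and the buffer argument shows the witnessing component is genuinely isolated in $G_{n,k}$. If instead $c\ge 1/c_1$ but $c<1/(2c_2)$, then $2c_2<c_1$ forces $c_2<c_1$, so $p_2(k)=n^{-c_2c+o(1)}$ dominates $p_1(k)$ asymptotically, and the analogous calculation with boundary tiles gives $p_2(k)\sqrt{n/k}=n^{1/2-c_2c+o(1)}/\sqrt{\log n}\to\infty$, producing a $B_k$ witness and hence disconnection with probability $1-o(1)$.

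The main obstacle I anticipate is verifying the buffer argument cleanly in both directions: that no vertex in $S'$ or $R'$ has any of its $k$ nearest neighbours outside the enclosing $S$ or $R$, and that no edge of $G_{n,k}$ can cross the corresponding buffer annulus. Both facts follow from \Lm{small} together with the Poisson concentration bound in \Lm{bound}, but they require careful bookkeeping, especially near corners of $S_n$ where a tile meets two sides of the boundary; such corner tiles are only $O(1)$ in number, so their total contribution to every union bound is negligible, but they do need to be included in the disjoint-tile collections used in the lower-bound direction.
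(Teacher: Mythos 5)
Your two tiling arguments (disjoint tiles plus independence for disconnection; overlapping tiles plus a first-moment/union bound for connectivity, with the buffer provided by the choice of $M$ and the long-edge/large-component estimates) are essentially the paper's proof, and the exponent bookkeeping ($n^{1-c_1c+o(1)}$ for interior tiles, $n^{1/2-c_2c+o(1)}$ for boundary tiles, with $c_{\text{crit}}=\max\{1/c_1,1/(2c_2)\}$) matches. However, there is one genuine gap, precisely at the corners of $S_n$ in the connectivity direction. Your dismissal --- ``such corner tiles are only $O(1)$ in number, so their total contribution to every union bound is negligible'' --- does not work as stated: the events $A_k$ and $B_k$ are defined only for fully interior tiles and for tiles meeting a single side of $S_n$, so no estimate of the form $e^{-(c_i+o(1))k}$ is available for a tile sitting in a corner, and $O(1)$ tiles times an unbounded (or merely unestimated) probability proves nothing. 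A small component of $G_{n,k}$ lying within distance $O(\sqrt{\log n})$ of a corner is captured by neither family of events, so your union bound simply does not cover it. What is needed is a separate proof that the probability of a confined component near a corner is $o(1)$; this is a nontrivial local-connectivity statement (of the same nature as showing $c_1,c_2>0$), and the paper handles it by importing from the proof of Theorem~13 of~\cite{BBSW} the bound $n^{o(1)}3^{-k}$ for corner components, which tends to $0$ since $k\ge 0.3\log n$. Either cite that estimate or supply a third, corner, family of events together with a proof that its probability vanishes.

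A smaller point: in the disconnection direction you say corner tiles ``need to be included in the disjoint-tile collections''; they do not. For the lower bound you only need sufficiently many disjoint tiles in which $A_k$ (or $B_k$) occurs independently with the stated probability, and corners are irrelevant there; what does deserve a sentence (as in the paper, ``by choice of $M$'') is that a component confined to $S'$ (or $R'$) of a tile remains a component of $G_{n,k}$, which is exactly your buffer argument and is fine.
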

\begin{proof}
We prove the lower bound first. Suppose that $c<c_{\text{crit}}$ and
$k=\lfloor c\log n\rfloor$.
We place $\Theta(n/\log n)$ disjoint squares $S$ (of area $M^2k$) in the interior of
$S_n$, and we place $\Theta(\sqrt{n/\log n})$ disjoint squares $R$ (also of area
$M^2k$) along the boundary of $S_n$, with the squares $R'$ lying along
the boundary of $S_n$. Let $\cP$ be a Poisson process of intensity one in $S_n$, and
consider the restriction of $\cP$ to one of the squares $S_1$. With probability $e^{-(c_1+o(1))k}$,
$S_1$ now contains a small component near its centre, and, by choice of $M$, such a component
would almost certainly remain a component in $G_{n,k}$. The probability that none of the squares $S$
contains a small component (in the respective restricted graph) near its centre is
\begin{align*}
p_{{\rm fail}}&=(1-e^{-(c_1+o(1))k})^{An/\log n}\\
&<\exp\{-A(n/\log n)e^{-(c_1+o(1))k}\}\\
&\le\exp\{-An^{1-o(1)+(c_1+o(1))c}\}\to 0,
\end{align*}
by independence, if $cc_1<1$.

Note that if $c_1=c_2$, we are done. Suppose then that $c_2<c_1$, and
consider the restriction of $\cP$ to one of the squares $R_1$. With probability
$e^{-(c_2+o(1))k}$, $R_1$ now contains a small component in its region $R_1'$, and, again
by choice of $M$, such a component would remain a component in $G_{n,k}$. The probability
that none of the squares $R$ contains a small component (in the respective restricted graph)
lying in $R'$ is
\begin{align*}
p_{{\rm fail}}&=(1-e^{-(c_2+o(1))k})^{B(n/\log n)^{1/2}}\\
&<\exp(-B(n/\log n)^{1/2}e^{-(c_2+o(1))k})\\
&\le\exp(-Bn^{1/2-o(1)-(c_2+o(1))c})\to 0,
\end{align*}
by independence, as long as $cc_1<1/2$. Hence, if either $cc_1<1$ or $cc_2<1/2$, i.e.,
for $c<c_{\text{crit}}$, $G_{n,k}$ will be asymptotically almost surely disconnected.

For the upper bound, suppose that $c>c_{\text{crit}}$ and that $k=\lfloor c\log n\rfloor$.
For notational simplicity, we assume that $c_2<c_1$.
From the proof of Theorem~13 in~\cite{BBSW}, the probability that $G_{n,k}$ contains
a component of size $O(\sqrt{\log n})$ within distance $O(\sqrt{\log n})$ of a corner
of $S_n$ is $n^{o(1)}3^{-k}$, which tends to 0 as $n\to\infty$.
Suppose then that there exists such a small component $H$
far from a corner. One can tile $S_n$ with $\Theta(n/\log n)$ overlapping squares $S$ and the
boundary of $S_n$ with $\Theta(\sqrt{n/\log n})$ overlapping squares $R$ such that $H$ lies in
one of the regions $S'$ or $R'$ of these tiles. (In the overlapping scheme, the centres of the
$S$-tiles form a lattice with horizontal and vertical spacing $\frac14M\sqrt k$, and the
boundary of the $R$-tiles that contain $0$ lie on the perimeter of $S_n$, at intervals of
$\frac14M\sqrt k$.) Therefore, the probability of such a component $H$ arising is at most the
expected number of tiles for which $A_k$ (for an $S$-tile) or $B_k$ (for an $R$-tile) occurs.
But for $c>c_{\text{crit}}$, this expectation is equal to
\[
 A(n/\log n)e^{-(c_1+o(1))k}+B(n/\log n)^{1/2}e^{-(c_2+o(1))k}=o(1).
\]
Hence $G_{n,k}$ is asymptotically almost surely connected.
\end{proof}


\begin{thebibliography}{99}

\bibitem{BBSW} P. Balister, B. Bollob\'as, A. Sarkar and M. Walters,
{\sl Connectivity of random $k$-nearest neighbour graphs},
Advances in Applied Probability {\bf 37} (2005), 1--24.

\bibitem{XuKu} F. Xue and P.R. Kumar,
{\sl The number of neighbors needed for connectivity of wireless networks},
Wireless Networks {\bf 10} (2004), 169--181.

\end{thebibliography}
\end{document}